\documentclass[12pt,reqno]{amsart}
\usepackage{enumerate, latexsym, amsmath, amsfonts, amssymb, amsthm, graphicx, color}
 \textwidth=13.5cm
   \textheight=22cm
\hoffset=-1cm\voffset-0.5truecm
\def\pmod #1{\ ({\rm{mod}}\ #1)}
\def\Z{\Bbb Z}

\def\Q{\Bbb Q}

\def\bg{\bigg}
\def\({\bg(}
\def\){\bg)}

\def\det{{\rm det}}
\def\sgn{{\rm sgn}}

\def\sgn{{\rm sgn}}

\def\Gal{{\rm Gal}}

\def\ve{\varepsilon}

\def\Ack{\medskip\noindent {\bf Acknowledgments}}
\theoremstyle{plain}
\newtheorem{theorem}{Theorem}

\newtheorem{lemma}{Lemma}
\newtheorem{corollary}{Corollary}

\theoremstyle{definition}

\theoremstyle{remark}

\makeatletter
\@namedef{subjclassname@2010}{%
  \textup{2010} Mathematics Subject Classification}
\makeatother
 \vspace{4mm}

\begin{document}
 \baselineskip=17pt
\hbox{} {}
\medskip
\title[On some determinants involving cyclotomic units]
{On some determinants involving cyclotomic units}
\date{}
\author[Hai-Liang Wu] {Hai-Liang Wu}

\thanks{2010 {\it Mathematics Subject Classification}.
Primary 11C20; Secondary 15A15, 11R18.
\newline\indent {\it Keywords}. determinants, cyclotomic fields, cyclotomic units.
\newline \indent Supported by the National Natural Science
Foundation of China (Grant No. 11571162).}

\address {(Hai-Liang Wu) Department of Mathematics, Nanjing
University, Nanjing 210093, People's Republic of China}
\email{{\tt whl.math@smail.nju.edu.cn}}

\begin{abstract}
For each odd prime $p$, let $\zeta_p$ denote a primitive $p$-th root of unity. In this paper, we study the determinants of some matrices
with cyclotomic unit entries. For instance, we show that when $p\equiv 3\pmod4$ and $p>3$ the determinant of the matrix
$\(\frac{1-\zeta_p^{j^2k^2}}{1-\zeta_p^{j^2}}\)_{1\le j,k\le (p-1)/2}$ can be written as $(-1)^{\frac{h(-p)+1}{2}}(a_p+b_pi\sqrt{p})$ with $a_p,b_p\in\frac12\Z$ and
$$\begin{cases}\nu_p(a_p)=\nu_p(b_p)=\frac{p-3}{8}&\mbox{if}\ p\equiv 3\pmod8,
\\\nu_p(a_p)=\nu_p(b_p)+1=\frac{p+1}{8}&\mbox{if}\ p\equiv 7\pmod8,\end{cases}$$
where $\nu_p(x)$ denotes the $p$-adic order of a $p$-adic integer $x$, and $h(-p)$ denotes the class number of the field $\Q(\sqrt{-p})$.
Meanwhile, let $(\frac{\cdot}{p})$ denote the Legendre symbol. We have
$$2^{\frac{p+1}{2}}a_pb_p=(-1){^\frac{p+1}{4}}p^{\frac{p-3}{4}}\det [S(p)],$$
and
$$2^{\frac{p-1}{2}}(a_p^2-pb_p^2)=\frac{p-1}{2}(-p)^{\frac{p-3}{4}}\det [S(p)],$$
where $\det [S(p)]$ is the determinant of the $\frac{p-1}{2}$ by $\frac{p-1}{2}$ matrix $S(p)$ with entries $S(p)_{j,k}=(\frac{j^2+k^2}{p})$ for any $1\le j,k\le (p-1)/2$.
\end{abstract}

\maketitle

\section{Introduction}
\setcounter{lemma}{0}
\setcounter{theorem}{0}
\setcounter{corollary}{0}
\setcounter{remark}{0}
\setcounter{equation}{0}
\setcounter{conjecture}{0}
\setcounter{proposition}{0}

Let $p$ be an odd prime and let $\zeta_p$ be a primitive $p$-th root of unity. Let $\Z[\zeta_p]^{\times}$ denote the group of units
of $\Q(\zeta_p)$ and let $V_p$ be the group generated by the set
$$\{\pm \zeta_p, 1-\zeta_p^k: 1\le k\le p-1\}.$$ Then the group of cyclotomic units $U_p$ is defined by
$$U_p=V_p\cap\Z[\zeta_p]^{\times}.$$
The group of cyclotomic units has close relations with the class number of the field $\Q(\zeta_p+\zeta_p^{-1})$. Readers may see
\cite{Washington} for more details. On the other hand, the product
$$\prod_{1\le k\le p-1}(1-\zeta_p^{k^2})$$
has deep connections with the
class number and fundamental unit of the unique quadratic subfield of $\Q(\zeta_p)$. Indeed, as a corollary of the analytic class number
formula (cf. \cite[p.344 Theorem 2]{BS}), when $p\equiv1\pmod 4$ we have
\begin{equation}\label{equation A}
\prod_{1\le k\le p-1}(1-\zeta_p^{k^2})=\ve_p^{-h(p)}\sqrt{p},
\end{equation}
where $h(p)$ and $\ve_p>1$ are the class number and the fundamental unit of the field $\Q(\sqrt{p})$ respectively.
And when $p\equiv 3\pmod 4$ and $p>3$ we also have
\begin{equation}\label{equation B}
\prod_{1\le k\le p-1}(1-\zeta_p^{k^2})=(-1)^{\frac{h(-p)+1}{2}}i\sqrt{p},
\end{equation}
where $h(-p)$ denotes the class number of the field $\Q(\sqrt{-p})$. 

Recently Sun \cite{S19} investigated the determinants of matrices with Legendre symbol entries. He first studied the matrices
$$S(p)=\(\(\frac{j^2+k^2}{p}\)\)_{1\le j,k\le (p-1)/2},$$
$$T(\triangle,p)=\(\(\frac{j^2+\triangle k^2}{p}\)\)_{0\le j,k\le (p-1)/2},$$
and
$$S(\triangle,p)=\(\(\frac{j^2+\triangle k^2}{p}\)\)_{1\le j,k\le (p-1)/2},$$
where $(\frac{\cdot}{p})$ denotes the Legendre symbol and $\triangle$ is a quadratic non-residue modulo $p$. In this line, later Sun \cite{S19B} studied the determinants of some matrices concerning the tangent function.

Inspired by the above results, in this paper we mainly focus on the determinant of the matrix with cyclotomic unit entries. Let
$$C_p=\(\frac{1-\zeta_p^{j^2k^2}}{1-\zeta_p^{j^2}}\)_{1\le j,k\le (p-1)/2}.$$
We will see later that the determinant $\det[C_p]$ has close relations with determinants of the matrices
$S(p)$, $T(\triangle,p)$ and $S(\triangle,p)$. 

Now we are in the position to state our main results of this paper.

\begin{theorem}\label{Thm. p=3mod4}
Let $p\equiv 3\mod 4$ be a prime with $p>3$. Then we may write
$$\det[C_p]=(-1)^{\frac{h(-p)+1}{2}}(a_p+b_pi\sqrt{p}),$$
with $a_p,b_p\in\frac12\Z$ and
$$\begin{cases}\nu_p(a_p)=\nu_p(b_p)=\frac{p-3}{8}&\mbox{if}\ p\equiv 3\pmod8,
\\\nu_p(a_p)=\nu_p(b_p)+1=\frac{p+1}{8}&\mbox{if}\ p\equiv 7\pmod8,\end{cases}$$
where $\nu_p(x)$ denotes the $p$-adic order of a $p$-adic integer $x$.
Meanwhile,
$$2^{\frac{p+1}{2}}a_pb_p=(-1){^\frac{p+1}{4}}p^{\frac{p-3}{4}}\det [S(p)],$$
and
$$2^{\frac{p-1}{2}}(a_p^2-pb_p^2)=\frac{p-1}{2}(-p)^{\frac{p-3}{4}}\det [S(p)],$$
\end{theorem}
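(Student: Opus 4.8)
My plan is to replace $C_p$ by the symmetric matrix $A=(1-\zeta_p^{j^2k^2})_{1\le j,k\le (p-1)/2}$, for which $A^2$ has a transparent shape. Since $j\mapsto j^2\bmod p$ lists the quadratic residues $R$, we have $C_p=DA$ with $D=\mathrm{diag}\bigl((1-\zeta_p^{j^2})^{-1}\bigr)$, so $\det[C_p]=\det A\big/P$ with $P=\prod_{r\in R}(1-\zeta_p^r)$, and by \eqref{equation B} one has $P=(-1)^{(h(-p)+1)/2}i\sqrt p$, hence $P^2=-p$ (this also explains the sign $(-1)^{(h(-p)+1)/2}$ in the statement). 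The entries of $C_p$ are cyclotomic units, so $\det[C_p]$ is an algebraic integer; it is fixed by each $\sigma_r\colon\zeta_p\mapsto\zeta_p^r$ with $r\in R$, because such $\sigma_r$ permutes the rows of $C_p$ by a translation of $\Z/\frac{p-1}2\Z$, an even permutation since $\frac{p-1}2$ is odd. Thus $\det[C_p]\in\Q(\sqrt{-p})$, in fact in $\Z[\frac{1+\sqrt{-p}}2]$, which already gives $a_p,b_p\in\frac12\Z$.

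The heart of the proof is the identity $A^2=\bigl(\frac p2-i\sqrt p\bigr)J+\frac{i\sqrt p}2\,S(p)$, where $J$ is the all-ones matrix. I would prove it by expanding $(A^2)_{j,l}=\sum_k(1-\zeta_p^{j^2k^2})(1-\zeta_p^{l^2k^2})$ and evaluating the inner sums via $\sum_{r\in R}\zeta_p^r=\frac{-1+i\sqrt p}2$ and the quadratic Gauss sum, using $j^2+l^2\not\equiv 0\pmod p$ (as $-1$ is a non-residue) to obtain $\sum_k\zeta_p^{(j^2+l^2)k^2}=\frac12\bigl(-1+(\frac{j^2+l^2}p)i\sqrt p\bigr)$. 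A short side computation, $\sum_{r\in R}(\frac{r+1}p)=-1$, shows $S(p)\mathbf 1=-\mathbf 1$, so $\mathbf 1$ is an eigenvector of $S(p)$ for the eigenvalue $-1$, and $\mathrm{adj}(S(p))\mathbf 1=-\det[S(p)]\mathbf 1$.

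Applying the matrix-determinant lemma (in adjugate form) to $A^2=\frac{i\sqrt p}2S(p)+\bigl(\frac p2-i\sqrt p\bigr)\mathbf 1\mathbf 1^{T}$ then yields $(\det A)^2=\bigl(\frac{i\sqrt p}2\bigr)^{(p-1)/2}\det[S(p)]\bigl(p+\frac{p-1}2i\sqrt p\bigr)$. Dividing by $P^2=-p$ and simplifying (using that $\frac{p-1}2$ is odd, so $(i\sqrt p)^{(p-1)/2}=(-p)^{(p-3)/4}i\sqrt p$) gives
\[
\det[C_p]^2=\frac{(-p)^{(p-3)/4}}{2^{(p-1)/2}}\det[S(p)]\left(\frac{p-1}2-i\sqrt p\right).
\]
Comparing with $\det[C_p]^2=(a_p^2-pb_p^2)+2a_pb_p\,i\sqrt p$ and matching the rational and $i\sqrt p$-parts produces exactly the two displayed identities (for the $a_pb_p$ one use $-(-p)^{(p-3)/4}=(-1)^{(p+1)/4}p^{(p-3)/4}$). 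For the $p$-adic orders, rewriting $A$ as a permuted circulant via a primitive root $g$ gives $\det A=\pm\frac{p-i\sqrt p}2\prod_{j=1}^{(p-3)/2}\tau_j$ with $\tau_j=\sum_{r\in R}\zeta_p^r\theta^j(r)=\frac12\bigl(g(\psi_j)+g(\psi_j\chi_2)\bigr)$, $\theta$ a generator of $\widehat R$, $\chi_2$ the Legendre character, and $\{\psi_j,\psi_j\chi_2\}$ running over the pairs of characters mod $p$ conjugate under $\chi_2$-twist other than $\{\chi_0,\chi_2\}$; in particular each $\tau_j\ne0$, so $\det A\ne0$ and hence $\det[S(p)]\ne0$. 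Stickelberger's theorem gives $\nu_p(g(\omega^{-a}))=a/(p-1)$; the two Gauss sums in $2\tau_j$ have unequal orders (indices differing by $\frac{p-1}2$), and summing the smaller one over $j$ gives $\sum_j\nu_p(\tau_j)=\frac1{p-1}\sum_{k=1}^{(p-3)/2}k=\frac{p-3}8$. Since $\nu_p\bigl(\frac{p-i\sqrt p}2\bigr)=\nu_p(P)=\frac12$, this yields $\nu_p(\det[C_p])=\frac{p-3}8$, and the boxed formula forces $\nu_p(\det[S(p)])=0$. Finally the two identities give $\nu_p(a_p)+\nu_p(b_p)=\nu_p(a_p^2-pb_p^2)=\frac{p-3}4$, and since $\nu_p(a_p),\nu_p(b_p)\in\Z_{\ge0}$ with $\nu_p(a_p^2-pb_p^2)=\min(2\nu_p(a_p),1+2\nu_p(b_p))$, solving this tiny system gives $\nu_p(a_p)=\nu_p(b_p)=\frac{p-3}8$ for $p\equiv3\pmod8$ and $\nu_p(a_p)=\nu_p(b_p)+1=\frac{p+1}8$ for $p\equiv7\pmod8$.

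The main obstacle is this last, $p$-adic, step: it requires careful bookkeeping in Stickelberger's theorem — tracking which Dirichlet characters pair up under the $\chi_2$-twist and adding their Gauss-sum valuations — or, equivalently, importing the fact $p\nmid\det[S(p)]$ from \cite{S19}. Everything else is essentially forced once one notices the identity $A^2=(\frac p2-i\sqrt p)J+\frac{i\sqrt p}2S(p)$ and the eigenrelation $S(p)\mathbf 1=-\mathbf 1$; the only remaining care is in keeping Gauss-sum sign conventions consistent (e.g. $g(\chi_2)=i\sqrt p$ and the precise meaning of $\sqrt{-p}$ in \eqref{equation B}).
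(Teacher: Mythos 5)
Your argument is correct and reaches all three assertions of the theorem; at bottom it is the same strategy as the paper's (turn Legendre symbols into Gauss sums, square the cyclotomic matrix to expose $\det[S(p)]$, descend to $\Q(\sqrt{-p})$ by a permutation-sign/Galois argument, and feed in $p\nmid\det[S(p)]$ from \cite{S19}), but the linear-algebra packaging is genuinely different. The paper borders the matrix: it works with $D_p=(\zeta_p^{j^2k^2})_{0\le j,k\le m}$, forms $\widetilde{D_p}D_p=i\sqrt{p}\cdot E_p$, and eliminates the extra row and column by column-sum operations to reach (\ref{equation D}); you instead square the unbordered $m\times m$ matrix $A=(1-\zeta_p^{j^2k^2})$ and evaluate the rank-one perturbation $A^2=\frac{i\sqrt p}{2}S(p)+(\frac p2-i\sqrt p)\mathbf 1\mathbf 1^{T}$ with the adjugate form of the matrix determinant lemma plus the eigenrelation $S(p)\mathbf 1=-\mathbf 1$. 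I checked that your formula for $\det[C_p]^2$ agrees exactly with the paper's component equations (\ref{equation E}) after its substitution $a_p=-v_p$, $b_p=u_p/p$, so both displayed product identities come out right. Your handling of the $p$-adic orders is slightly cleaner: the paper extracts $\nu_p(v_p)\le\nu_p(u_p)\le\nu_p(v_p)+1$ from the quotient relation (\ref{equation F}), whereas you get the same dichotomy from the parity mismatch between $2\nu_p(a_p)$ and $2\nu_p(b_p)+1$ in $\nu_p(a_p^2-pb_p^2)=\min(2\nu_p(a_p),2\nu_p(b_p)+1)$; both routes hinge on $\nu_p(a_p)+\nu_p(b_p)=\frac{p-3}{4}$ and hence on $p\nmid\det[S(p)]$. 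Your optional Stickelberger computation of $\nu_p(\det A)$ is a genuine addition the paper does not attempt: carried out in full it would make the valuation claim (and $p\nmid\det[S(p)]$) self-contained instead of a citation, though as written it is only a sketch and the pairing of characters under the $\chi_2$-twist still needs to be written down carefully. Two minor points: the identity $P=(-1)^{(h(-p)+1)/2}i\sqrt p$ you use is for the product over the $(p-1)/2$ quadratic residues, which is the version the paper actually invokes in its proof (the displayed (\ref{equation B}), with $k$ up to $p-1$, is literally $P^2$); and your observation that $\sigma_{a^2}$ permutes the rows of $C_p$ evenly because $(p-1)/2$ is odd is precisely the content of Lemma \ref{Lem. permutation}, so nothing is missing there.
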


From Theorem \ref{Thm. p=3mod4}, it is easy to obtain the following result.
\begin{corollary}
Let notations be as in the Theorem \ref{Thm. p=3mod4}. Then
$$\nu_2(\det[S(p)])\ge \frac{p-3}{2}.$$
\end{corollary}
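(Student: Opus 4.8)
The plan is to read off the inequality directly from the first displayed identity in Theorem \ref{Thm. p=3mod4}, namely
$$2^{\frac{p+1}{2}}a_pb_p=(-1)^{\frac{p+1}{4}}p^{\frac{p-3}{4}}\det[S(p)].$$
Since $p\equiv3\pmod4$, both $\frac{p+1}{4}$ and $\frac{p-3}{4}$ are nonnegative integers, and $\frac{p+1}{2}\ge 2$ because $p>3$, so all the exponents appearing are genuine integers.

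First I would invoke the part of the theorem asserting $a_p,b_p\in\frac12\Z$: this gives $2a_p,2b_p\in\Z$, hence $(2a_p)(2b_p)=4a_pb_p\in\Z$. Rewriting the identity above as
$$(-1)^{\frac{p+1}{4}}p^{\frac{p-3}{4}}\det[S(p)]=2^{\frac{p+1}{2}}a_pb_p=2^{\frac{p-3}{2}}(2a_p)(2b_p),$$
I would then compare $2$-adic valuations of the two sides. On the left, $\nu_2\big((-1)^{\frac{p+1}{4}}p^{\frac{p-3}{4}}\big)=0$ since $p$ is odd, so the left side has $2$-adic valuation $\nu_2(\det[S(p)])$ (note $\det[S(p)]\in\Z$, as every entry of $S(p)$ is $\pm1$ when $p\equiv3\pmod4$, so this makes sense). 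On the right, $\nu_2\big(2^{\frac{p-3}{2}}(2a_p)(2b_p)\big)=\frac{p-3}{2}+\nu_2\big((2a_p)(2b_p)\big)\ge\frac{p-3}{2}$ because $(2a_p)(2b_p)$ is a rational integer. Combining the two gives $\nu_2(\det[S(p)])\ge\frac{p-3}{2}$; if $\det[S(p)]=0$ the inequality holds trivially with the convention $\nu_2(0)=+\infty$.

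There is essentially no obstacle here beyond bookkeeping: the only points requiring a word of care are that $\frac{p\pm1}{2}$, $\frac{p+1}{4}$, $\frac{p-3}{4}$ are integers for $p\equiv3\pmod4$ with $p>3$, and that the clearing-of-denominators step $2^{\frac{p+1}{2}}a_pb_p=2^{\frac{p-3}{2}}(2a_p)(2b_p)$ produces a $2$-adic integer, which is exactly what the integrality statement $a_p,b_p\in\frac12\Z$ from Theorem \ref{Thm. p=3mod4} guarantees. One could instead start from the second identity $2^{\frac{p-1}{2}}(a_p^2-pb_p^2)=\frac{p-1}{2}(-p)^{\frac{p-3}{4}}\det[S(p)]$, but since $\frac{p-1}{2}$ is odd this only yields the weaker bound $\nu_2(\det[S(p)])\ge\frac{p-5}{2}$ unless one also controls $\nu_2(a_p^2-pb_p^2)$, so the first identity is the efficient route.
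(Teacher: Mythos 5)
Your proof is correct and follows the route the paper clearly intends: the paper gives no explicit argument beyond ``From Theorem \ref{Thm. p=3mod4}, it is easy to obtain,'' and reading the $2$-adic valuation off the identity $2^{\frac{p+1}{2}}a_pb_p=(-1)^{\frac{p+1}{4}}p^{\frac{p-3}{4}}\det[S(p)]$ using $2a_p,2b_p\in\Z$ is exactly that easy deduction.
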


When $p\equiv 1\pmod 4$, there are $a,b\in\Z$ with $2\nmid a$ such that $p=a^2+b^2$. Then we have the following result.
\begin{theorem}\label{Thm. p=1mod4}
Let $p\equiv1\pmod4$ be a prime. Then we may write
$$\det[C_p]=\frac{\ve_p^{h(p)}}{\sqrt{p}}(\alpha_p+\beta_p\sqrt{p})\sqrt{\(\frac{2}{p}\)2p+2a\sqrt{p}},$$
with $\alpha_p,\beta_p\in\Q$. Meanwhile, we have
$$\pm 2^{\frac{p+1}{4}}b(\alpha^2_p-p\beta^2_p)=p^{\frac{p-1}{4}}\det[T(\triangle,p)].$$
\end{theorem}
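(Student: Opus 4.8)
The plan is to diagonalise $C_p$ by a cyclotomic Gauss‑sum computation and read off both parts. First I would strip the common row factors: pulling $(1-\zeta_p^{j^2})^{-1}$ out of the $j$‑th row of $C_p$ gives $\det[C_p]=\det[M]\big/\prod_{j=1}^{(p-1)/2}(1-\zeta_p^{j^2})$, where $M=\bigl((1-\zeta_p^{j^2k^2})\bigr)_{1\le j,k\le(p-1)/2}$. Since $\{j^2\bmod p:1\le j\le(p-1)/2\}$ is exactly the set of quadratic residues, the denominator is $\prod_{(\frac kp)=1}(1-\zeta_p^k)=\ve_p^{-h(p)}\sqrt p$ by \eqref{equation A}, so $\det[C_p]=\tfrac{\ve_p^{h(p)}}{\sqrt p}\det[M]$ and the whole problem is about $\det[M]$. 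Now fix a primitive root $g$ mod $p$ and put $N=(p-1)/2$; re‑indexing rows and columns of $M$ by the discrete logarithm to base $g^2$ of $j^2$ (resp.\ of $k^2$) turns $M$, up to a fixed row/column permutation, into the retrocirculant $R=(c_{m+n\bmod N})_{0\le m,n<N}$ with $c_\ell=1-\zeta_p^{g^{2\ell}}$, and writing a retrocirculant as a circulant composed with order‑reversal gives $\det[R]=\pm\prod_{\ell=0}^{N-1}\widehat c(\omega^\ell)$ with $\omega=e^{2\pi i/N}$, $\widehat c(x)=\sum_\ell c_\ell x^\ell$. A short computation gives $\widehat c(1)=N-\sum_{(\frac rp)=1}\zeta_p^r=\tfrac{p-\sqrt p}{2}$ and $\widehat c(\omega^\ell)=-\tau(\chi_\ell)$ for $1\le\ell\le N-1$, where the $\chi_\ell$ run over the non‑trivial characters of the cyclic group $Q$ of quadratic residues and $\tau(\chi):=\sum_{(\frac rp)=1}\chi(r)\zeta_p^r$; hence $\det[M]=\pm\,\tfrac{p-\sqrt p}{2}\prod_{\chi\ne1}\tau(\chi)$.

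To isolate the radical I would extend each $\chi$ to a Dirichlet character $\widehat\chi$ mod $p$, so $\tau(\chi)=\tfrac12\bigl(G(\widehat\chi)+G(\widehat\chi\psi)\bigr)$ with $\psi=\bigl(\tfrac{\cdot}{p}\bigr)$ and $G(\rho)=\sum_{a=1}^{p-1}\rho(a)\zeta_p^{a}$. As $N$ is even, $Q$ has a unique character $\eta$ of order $2$, whose two extensions are the conjugate quartic characters $\chi_4,\bar\chi_4$ mod $p$; so $\tau(\eta)=\tfrac12\bigl(G(\chi_4)+G(\bar\chi_4)\bigr)$, and from $G(\chi_4)^2=\sqrt p\,J(\chi_4,\chi_4)$ with $J(\chi_4,\chi_4)=a+bi$ ($a^2+b^2=p$, $a$ odd) and $G(\chi_4)G(\bar\chi_4)=\chi_4(-1)p=\bigl(\tfrac2p\bigr)p$ one gets $\bigl(G(\chi_4)+G(\bar\chi_4)\bigr)^2=\bigl(\tfrac2p\bigr)2p+2a\sqrt p$. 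For the remaining factors I would argue by Galois: if $\sigma\in\Gal(\overline\Q/\Q(\sqrt p))$ and $\sigma\zeta_p=\zeta_p^{a}$ with $a\in Q$, then $\sigma(\tau(\chi))=\chi'(a^{-1})\tau(\chi')$ where $\chi'\colon r\mapsto\sigma(\chi(r))$ is again a non‑trivial character of $Q$ (equal to $\eta$ exactly when $\chi=\eta$), and since $\prod_{\text{all }\chi}\chi(r)=\eta(r)$ on $Q$, the accumulated root of unity is $\prod_{\chi\ne1,\eta}\chi(a^{-1})=1$; hence $\prod_{\chi\ne1,\eta}\tau(\chi)$ is $\Gal(\overline\Q/\Q(\sqrt p))$‑invariant, so it lies in $\Q(\sqrt p)$. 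Writing this $\Q(\sqrt p)$‑element times $\pm\tfrac{p-\sqrt p}{4}$ as $\alpha_p+\beta_p\sqrt p$ (with $\alpha_p,\beta_p\in\Q$) and multiplying by $\tfrac{\ve_p^{h(p)}}{\sqrt p}$ gives the first displayed identity.

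For the norm identity I would apply $\sigma_0\in\Gal(\overline\Q/\Q)$ with $\sigma_0\zeta_p=\zeta_p^{g}$ (restricting to $\sqrt p\mapsto-\sqrt p$). The same retrocirculant computation on $\sigma_0(M)=(1-\zeta_p^{g\,j^2k^2})$ gives $\sigma_0(\det[M])=\pm\tfrac{p+\sqrt p}{2}\prod_{\chi\ne1}\bigl(-\widehat\chi(g)^{-1}\tfrac12(G(\widehat\chi)-G(\widehat\chi\psi))\bigr)$, whence
$$\det[M]\,\sigma_0(\det[M])=\pm\,\frac{p(p-1)}{4}\,2^{\,3-p}\Bigl(\prod_{\chi\ne1}\widehat\chi(g)^{-1}\Bigr)\prod_{\chi\ne1}\bigl(G(\widehat\chi)^2-G(\widehat\chi\psi)^2\bigr).$$
On the other hand $\det[M]=(\alpha_p+\beta_p\sqrt p)\bigl(G(\chi_4)+G(\bar\chi_4)\bigr)$ and $\bigl(G(\chi_4)+G(\bar\chi_4)\bigr)\bigl(G(\chi_4)-G(\bar\chi_4)\bigr)=G(\chi_4)^2-G(\bar\chi_4)^2=2bi\sqrt p$, so $\det[M]\sigma_0(\det[M])=\pm2b\sqrt p\,(\alpha_p^2-p\beta_p^2)$. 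It then remains to identify $\prod_{\chi\ne1}\widehat\chi(g)^{-1}\bigl(G(\widehat\chi)^2-G(\widehat\chi\psi)^2\bigr)$ with $\det[T(\triangle,p)]$ up to explicit powers of $2$ and $p$: expanding $T(\triangle,p)$ via $\bigl(\tfrac np\bigr)=\tfrac1{\sqrt p}\sum_c\psi(c)\zeta_p^{cn}$ and diagonalising the resulting quadratic form over $Q$ produces a product of exactly the same shape over the non‑trivial characters of $Q$ (this is where the non‑residue $\triangle$ enters, through the choice of extensions $\widehat\chi$); comparing the two, using $\#\{\chi\ne1\}=(p-3)/2$ and $|G(\widehat\chi)|=\sqrt p$, yields $\pm2^{(p+1)/4}b(\alpha_p^2-p\beta_p^2)=p^{(p-1)/4}\det[T(\triangle,p)]$.

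The hard part will be this last matching. One must track the root‑of‑unity factors $\widehat\chi(g)^{-1}$, the precise choice of extensions $\widehat\chi$ (equivalently of $\triangle$), and reconcile all the sign and square‑root choices (for $\sqrt p$, for $\sqrt{(\frac2p)2p+2a\sqrt p}$, for $a$ and $b$, and for the retrocirculant permutations) so that the Gauss‑sum product coming from $C_p$ and the independent evaluation of $\det[T(\triangle,p)]$ agree up to the stated power of $2$, the powers of $p$ being forced by absolute values; a secondary point is to fix the primitive root of unity $\zeta_p$ consistently, since $\det[C_p]$ itself depends on that choice.
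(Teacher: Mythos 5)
Your treatment of the first identity is correct but genuinely different from the paper's. Where the paper proves $\det[C_p]\in\frac{\ve_p^{h(p)}}{\sqrt p}\cdot\Q(\delta_p)$ by a soft Galois argument (Lemma \ref{Lem. Galois}: $\sigma_{a^2}$ multiplies $\det[D_p]$ by $(\frac ap)^{(p+1)/2}$, so the determinant descends to the quartic field, and the quadratic part is killed by squaring), you evaluate the determinant explicitly: after removing the row factors, the matrix $(1-\zeta_p^{j^2k^2})$ becomes a group matrix over the cyclic group of quadratic residues, so its determinant factors as $\pm\frac{p-\sqrt p}{2}\prod_{\chi\ne1}\tau(\chi)$, with $\tau(\eta)=\pm\frac12\delta_p$ supplying the radical and the remaining product lying in $\Q(\sqrt p)$ by your (correct) root-of-unity bookkeeping $\prod_{\chi}\chi=\eta$ on $Q$. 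This buys strictly more information than the paper's lemma (an actual Gauss-sum product formula for $\alpha_p+\beta_p\sqrt p$), at the cost of more machinery; both routes validly establish the first display.

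The second identity, however, is not proved. The entire content of $\pm2^{?}b(\alpha_p^2-p\beta_p^2)=p^{(p-1)/4}\det[T(\triangle,p)]$ is the link between the Gauss-sum product and $\det[T(\triangle,p)]$, and that is exactly the step you defer ("it then remains to identify\dots the hard part will be this last matching"). The paper obtains this link in one stroke from the matrix identity $\widetilde{D_p}D_p^{\triangle}=\sqrt p\,F_p$ (a matrix form of $(\frac np)=\frac1{\sqrt p}(1+2\sum_t\zeta_p^{t^2n})$), together with the cofactor expansion $\det[F_p]=\sqrt p\det[S(\triangle,p)]+\det[T(\triangle,p)]$; separating the rational and $\sqrt p$ parts then yields both $\det[S(\triangle,p)]=0$ and the target identity simultaneously. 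Your proposed substitute --- independently "diagonalising" $T(\triangle,p)$ over the characters of $Q$ and matching it against $\prod_{\chi\ne1}\widehat\chi(g)^{-1}(G(\widehat\chi)^2-G(\widehat\chi\psi)^2)$ --- is not carried out, and it silently ignores that $T(\triangle,p)$ is indexed from $0$, so it has a border row and column that prevent it from being a group matrix; handling that border is precisely what produces the companion term $\det[S(\triangle,p)]$ in the paper's argument. Finally, be careful that your last line simply asserts the constant $2^{(p+1)/4}$ from the statement: the paper's own derivation at the corresponding point produces $2^{m+1}=2^{(p+1)/2}$ (compare the displayed equation after (\ref{equation J}) with the theorem as stated), so the power of $2$ is exactly the kind of detail your sketch cannot be trusted on until the matching is done explicitly.
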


\maketitle
\section{Proofs of Theorem 1.1--1.2}
\setcounter{lemma}{0}
\setcounter{theorem}{0}
\setcounter{corollary}{0}
\setcounter{remark}{0}
\setcounter{equation}{0}
\setcounter{conjecture}{0}

Throughout this section, we write $p=2m+1$.

For each $a\in\Z$ with $\gcd (a,p)=1$, clearly

$$(a\cdot1)^2\pmod p,\ \cdots,\ (a\cdot m^2)\pmod p$$
is a permutation on
$$1^2\pmod p,\ \cdots,\ m^2\pmod p.$$
We denote this permutation by $\tau_a$. Then we have the following result concerning the sign of $\tau_a$.

\begin{lemma}\label{Lem. permutation}
$$\sgn(\tau_a)=\begin{cases}1&\mbox{if}\ p\equiv 3\pmod4,
\\(\frac{a}{p})&\mbox{if}\ p\equiv 1\pmod4.\end{cases}$$
\end{lemma}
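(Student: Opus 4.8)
The plan is to compute the sign of the permutation $\tau_a$ of the set of nonzero quadratic residues $\{1^2,\dots,m^2\}\pmod p$ given by multiplication by $a$, by relating it to a permutation whose sign is classically known. First I would recall that the full multiplication-by-$a$ map on $(\Z/p\Z)^\times$ has sign $(\frac{a}{p})$; this is essentially Zolotarev's lemma, which identifies the Legendre symbol with the sign of this permutation. The subtlety is that $\tau_a$ lives on the index-$2$ subgroup of squares, not on the whole group, so I cannot simply quote Zolotarev. Instead I would factor the multiplication-by-$a$ permutation of $(\Z/p\Z)^\times$ according to the two cosets: the subgroup $R$ of quadratic residues and the coset $N$ of non-residues. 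If $a$ is a quadratic residue, multiplication by $a$ preserves both $R$ and $N$, giving $\sgn(\text{mult}_a) = \sgn(\tau_a)\cdot\sgn(\tau_a')$ where $\tau_a'$ is the analogous permutation on $N$; if $a$ is a non-residue, multiplication by $a$ swaps $R$ and $N$, and one can still bookkeep the sign contribution of that swap.

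The cleaner route, which I would actually follow, is to work through the explicit parametrization $k \mapsto k^2$ from $\{1,\dots,m\}$ onto $\{1^2,\dots,m^2\}\pmod p$. Pulling $\tau_a$ back along this bijection, $\tau_a$ corresponds to the permutation $\sigma_a$ of $\{1,\dots,m\}$ defined by: $\sigma_a(k)$ is the unique element of $\{1,\dots,m\}$ with $\sigma_a(k)^2 \equiv a k^2 \pmod p$. When $a$ is a quadratic residue, say $a \equiv c^2$, this is close to multiplication by $c$ on $\{1,\dots,m\}$ up to the folding $x \leftrightarrow p-x$, and I would count how many times the fold is applied — this parity is exactly what governs the sign. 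When $a$ is a non-residue, $\sigma_a(k)^2 \equiv ak^2$ has solutions only after noting $a k^2$ is still being matched to a square among $\{1,\dots,m\}^2$, which it is since every nonzero residue class squared lands in that set; here the relevant count changes by the number of $k$ for which $ak^2 \bmod p$ exceeds $m$, and the parity works out to pick up the factor that distinguishes $p\equiv 1$ from $p\equiv 3\pmod 4$.

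I expect the main obstacle to be the case analysis by $p \bmod 4$ and keeping the fold-counting parity straight. The key input is that $-1$ is a quadratic residue mod $p$ iff $p\equiv 1\pmod4$: this is what makes the map $x\mapsto p-x$ (i.e. $x\mapsto -x$) on $\{1,\dots,m\}$ "invisible" squaring-wise, and it is exactly the reason the two cases differ. Concretely, for $p\equiv 3\pmod4$, $-1$ is a non-residue, so $k$ and $-k\equiv p-k$ are never both representing the same square in a way that flips sign — the upshot is that every $\tau_a$ is even, giving $\sgn(\tau_a)=1$; I would make this precise by exhibiting $\tau_a$ as a product of an \emph{even} number of transpositions, perhaps by pairing up elements or by showing $\tau_a$ is a square in the symmetric group (e.g. $\tau_a = \tau_b^2$ when $a\equiv b^2$). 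For $p\equiv 1\pmod4$ I would instead directly compare $\sgn(\tau_a)$ to Zolotarev's $(\frac{a}{p})$ by lifting $\tau_a$ to a permutation of $(\Z/p\Z)^\times$ that squares to it or differs from multiplication-by-$a$ by an even permutation. The bookkeeping in the non-residue subcase — where the coset structure genuinely intervenes — is where I anticipate the calculation being most delicate, and I would likely organize it via a counting identity for $\#\{1\le k\le m : (ak^2 \bmod p) > m\}$ modulo $2$, reducing everything to a Gauss-lemma-style count.
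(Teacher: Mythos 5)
There is a genuine gap, and it begins with the object whose sign you are computing. The permutation $\tau_a$ in the paper is $k^2\bmod p\mapsto (a\cdot k)^2\bmod p$, i.e.\ multiplication by $a^2$ on the set of nonzero quadratic residues; the displayed list ``$(a\cdot 1)^2,\ldots$'' makes this clear despite the typo in its last term, only this reading yields a permutation of $\{1^2,\ldots,m^2\}$ for \emph{every} $a$ prime to $p$, and it is the permutation actually used in Lemma 2.2 (where $\sigma_{a^2}$ sends $\zeta_p^{j^2k^2}$ to $\zeta_p^{a^2j^2k^2}$). You instead take $\tau_a$ to be multiplication by $a$. For $a$ a non-residue that map does not preserve the squares at all: $ak^2$ is then a non-residue, so ``the unique element $\sigma_a(k)$ of $\{1,\dots,m\}$ with $\sigma_a(k)^2\equiv ak^2$'' does not exist, and your attempted rescue (``$ak^2$ is still being matched to a square \dots since every nonzero residue class squared lands in that set'') silently replaces $ak^2$ by $(ak)^2=a^2k^2$, which is a different map. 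The misreading is not harmless: for the map you describe, restricted to residues $a$ where it is defined, the asserted answer $(\frac{a}{p})=+1$ is false --- for $p=5$ and $a=4$, multiplication by $4$ swaps the residues $1$ and $4$, a transposition of sign $-1$, while $(\frac{4}{5})=+1$. The true statement concerns multiplication by $a^2$ and genuinely depends on more than the quadratic character of the multiplier.

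Beyond the definitional problem, the decisive case --- $p\equiv1\pmod 4$ with $a$ a non-residue, the only case where the answer is $-1$ --- is never computed; it is deferred to an unexecuted ``Gauss-lemma-style count'' with the conclusion asserted as ``the parity works out.'' The paper's own proof is a two-line computation that bypasses all of this: since the sign of a permutation $\pi$ of distinct residues $x_1,\dots,x_m$ modulo $p$ is $\prod_{i<j}\frac{x_{\pi(j)}-x_{\pi(i)}}{x_j-x_i}$, one gets
$$\sgn(\tau_a)\equiv\prod_{1\le i<j\le m}\frac{a^2j^2-a^2i^2}{j^2-i^2}=a^{m(m-1)}\equiv\left(\frac{a}{p}\right)^{m-1}\pmod p,$$
where $m-1=\frac{p-3}{2}$ is even or odd according as $p\equiv3$ or $1\pmod 4$; since both sides are $\pm1$ and $p>2$, the congruence determines the sign. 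If you want to salvage your group-theoretic approach: your ``$\tau_a$ is a square in the symmetric group'' idea does work cleanly for $p\equiv3\pmod4$ (exactly one of $\pm a$ is a residue $c$, and multiplication by $a^2=c^2$ is the square of the permutation ``multiplication by $c$'' of the residues), but for $p\equiv1\pmod4$ you would still need an honest argument, e.g.\ via cycle structure: multiplication by $a^2$ on the cyclic group of residues of order $m$ splits into $m/d$ cycles of length $d=\ord(a^2)$, and $m/d$ is odd precisely when $a$ is a non-residue.
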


\begin{proof}
By definition our result follows from
$$\sgn(\tau_a)\equiv \prod_{1\le i<j\le m}\frac{a^2j^2-a^2i^2}{j^2-i^2}=a^{m(m-1)}\equiv \(\frac{a}{p}\)^{\frac{p+1}{2}}\pmod p.$$
\end{proof}

Before the statement of our second lemma, we first observe the following fact.
When $p\equiv 1\pmod 4$, we may write $p=a^2+b^2$ with $a,b\in\Z$ and $2\nmid a$. In this case, $\Q(\zeta_p)$ has a unique quartic subfield
$\Q(\delta_p)$, where $$\delta_p=\sqrt{\(\frac{2}{p}\)2p+2a\sqrt{p}}.$$
In fact, let
$$g(4)=\sum_{0\le t\le p-1}\zeta_p^{t^4}.$$
Then $g(4)$ has close connection with the quartic Gauss sum. Readers may see \cite{BE} for more details. And we have
(cf. \cite[(4.6)]{BE})
$$g(4)=\sqrt{p}\pm\sqrt{\(\frac{2}{p}\)2p+2a\sqrt{p}}.$$
Hence $\Q(\delta_p)$ is exactly the unique quartic subfield of $\Q(\zeta_p)$.

\begin{lemma}\label{Lem. Galois}
Let $p$ be an odd prime and let $D_p$ be a matrix of the form
$$D_p=(\zeta_p^{j^2k^2})_{0\le j,k\le m}.$$
We have the following results.

{\rm (i)} If $p\equiv 3\pmod 4$, then we may write
$$\det[D_p]=(u_p+v_pi\sqrt{p})$$
with $u_p,v_p\in\frac12 \Z$.

{\rm (ii)} If $p\equiv 1\pmod 4$ of the form $a^2+b^2$ with $a,b\in\Z$ and $2\nmid a$, then we may write
$$\det[D_p]=(\alpha_p+\beta_p\sqrt{p})\sqrt{\(\frac{2}{p}\)2p+2a\sqrt{p}}$$
with $\alpha_p,\beta_p\in\Q$.
\end{lemma}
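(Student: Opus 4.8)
The plan is to understand $\det[D_p]$ as an element of the cyclotomic field $\Q(\zeta_p)$ and to identify which subfield it actually lies in by a Galois-theoretic argument, after which the stated forms follow because the relevant subfield is quadratic (case (i)) or quartic (case (ii)). First I would note that $\det[D_p]$ is manifestly an algebraic integer in $\Z[\zeta_p]$, since every entry $\zeta_p^{j^2k^2}$ is. The Galois group $\Gal(\Q(\zeta_p)/\Q)$ is cyclic of order $p-1$, generated by $\sigma_g\colon\zeta_p\mapsto\zeta_p^g$ for a fixed primitive root $g$ modulo $p$; applying $\sigma_a$ for $a$ coprime to $p$ sends $D_p$ to the matrix $(\zeta_p^{aj^2k^2})_{0\le j,k\le m}$. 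The key observation is that $aj^2k^2 = (aj^2)k^2$, and as $j$ runs over $1,\dots,m$ the residues $aj^2 \bmod p$ are a permutation $\tau_a$ of $j^2 \bmod p$ (and the $j=0$ row is fixed). Hence $\sigma_a$ permutes the rows of $D_p$ by $\tau_a$, so $\sigma_a(\det[D_p]) = \sgn(\tau_a)\det[D_p]$. Now Lemma \ref{Lem. permutation} gives $\sgn(\tau_a)=1$ when $p\equiv3\pmod4$ and $\sgn(\tau_a)=(\frac{a}{p})$ when $p\equiv1\pmod4$.

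From here the two cases separate cleanly. In case (i), $p\equiv3\pmod4$, every $\sigma_a$ fixes $\det[D_p]$, so $\det[D_p]\in\Q$ — wait, that would be too strong; I need to be careful. Actually $\sgn(\tau_a)=1$ for all $a$ says $\det[D_p]$ is fixed by the whole Galois group, hence rational. But the claim is that it lies in $\Q(i\sqrt p)$, which is weaker, so this is consistent only if in fact $\det[D_p]$ is not forced rational — let me reconsider. The subtlety is that $D_p$ as written uses $\zeta_p^{j^2k^2}$ with $j,k$ from $0$ to $m$, and the permutation argument applies columns too; but actually $\sigma_a$ acts on the single variable $\zeta_p$, sending entry $(j,k)$ to $\zeta_p^{aj^2k^2}$, and one permutation of rows (say by $\tau_a$ acting on the row index via $j^2\mapsto aj^2$) already accounts for the full effect — so indeed $\sigma_a(\det[D_p])=\sgn(\tau_a)\det[D_p]$ with a single sign. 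So when $p\equiv3\pmod4$ we would get $\det[D_p]\in\Q$, contradicting the stated form unless $v_p$ can be zero. The resolution must be that the permutation of rows is \emph{not} the whole story: reindexing rows changes the determinant by $\sgn(\tau_a)$, but the resulting matrix $(\zeta_p^{aj^2k^2})$ is literally $\sigma_a(D_p)$ only if we also recognize it equals $D_p$ after that row permutation — which it does. So the honest conclusion is $\sigma_a(\det D_p)=\sgn(\tau_a)\det D_p$. For $p\equiv 3\pmod 4$ this gives rationality, which is compatible with $\det[D_p]=u_p+v_p i\sqrt p$ only if that expression is allowed to have $v_p\ne 0$ — so I must be misreading the sign. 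The correct count: $\sigma_a$ permutes rows by $\tau_a$ \emph{and} columns by $\tau_a$ (both indices $j,k$ appear squared and symmetrically), so $\sigma_a(\det D_p)=\sgn(\tau_a)^2\det D_p=\det D_p$ always, for every $p$. That is the right statement but it proves too much. So the genuine mechanism must instead track $\sigma_a$ composed with a \emph{single} transformation.

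Let me restate the plan correctly: the operative identity is that $\sigma_a(D_p)$, obtained by replacing $\zeta_p$ with $\zeta_p^a$, equals $P_a^{\mathsf T} D_p P_a$ where $P_a$ is the permutation matrix of $\tau_a$ — no, that double-counts. In fact $\sigma_a(D_p)=(\zeta_p^{aj^2k^2})$; writing $aj^2k^2\equiv (aj^2)\cdot k^2$, this is $D_p$ with rows permuted by $\tau_a$, so $\sigma_a(D_p)=P_a D_p$ and $\sigma_a(\det D_p)=\sgn(\tau_a)\det D_p$. This is the formula I would use; for $p\equiv 3\pmod 4$, $\sgn(\tau_a)=1$ forces $\det D_p\in\Q$, and then $v_p=0$ is permitted by the stated conclusion "$u_p,v_p\in\frac12\Z$" — so part (i) is simply the assertion $\det D_p\in\frac12\Z[i\sqrt p]$, which rationality implies. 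For part (ii), $p\equiv1\pmod4$: $\sigma_a(\det D_p)=(\frac a p)\det D_p$, so $\det D_p$ is fixed exactly by the squares, i.e.\ lies in the quadratic subfield $\Q(\sqrt p)$ times a fixed vector for the sign character. Concretely $(\det D_p)^2$ is Galois-fixed up to the even power, hence $(\det D_p)^2\in\Q(\sqrt p)$; and $\det D_p$ itself transforms by the quadratic character, so $\det D_p=\gamma\cdot\theta$ where $\gamma\in\Q(\sqrt p)$ and $\theta$ is any fixed element transforming by $(\frac\cdot p)$ — the Gauss-sum-type element $g(4)-\sqrt p=\pm\delta_p$ does exactly this, as the displayed formula for $g(4)$ shows. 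Therefore $\det D_p=(\alpha_p+\beta_p\sqrt p)\,\delta_p$ with $\alpha_p,\beta_p\in\Q$, which is the claim. The main obstacle is pinning down precisely which character $\sigma_a\mapsto\sgn(\tau_a)$ equals and then exhibiting one explicit field element (here $\delta_p$, via the quartic Gauss sum identity already quoted) with the same transformation law, so that the ratio lands in the fixed field $\Q(\sqrt p)$; the rationality/denominator bookkeeping ($\frac12\Z$ versus $\Z$, and $\Q$ versus $\frac12\Z$) is routine once the Galois descent is in place.
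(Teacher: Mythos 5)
Your overall strategy --- Galois descent by computing how automorphisms act on $\det[D_p]$ via a permutation of rows --- is the right one and is the paper's strategy, but the execution contains a genuine error at the key step. You apply $\sigma_a$ for an \emph{arbitrary} $a$ coprime to $p$ and assert that $(\zeta_p^{aj^2k^2})_{j,k}$ is obtained from $D_p$ by permuting rows, because ``$aj^2 \bmod p$ is a permutation of $j^2\bmod p$.'' That is only true when $a$ is a quadratic residue: if $a$ is a non-residue, then $\{aj^2 \bmod p: 1\le j\le m\}$ is the set of quadratic \emph{non}-residues, disjoint from $\{j^2\bmod p\}$, so the rows of $\sigma_a(D_p)$ are not rows of $D_p$ at all and no sign identity holds. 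This is why your argument ``proves too much'': for $p\equiv 3\pmod 4$ you are driven to $\det[D_p]\in\Q$, and you then rescue the statement by allowing $v_p=0$. But $\det[D_p]\in\Q$ is actually false here --- it would force $u_pv_p=0$ and hence $\det[S(p)]=0$ via the identity $2^{m+1}u_pv_p=-(-p)^{(m+1)/2}\det[S(p)]$ used later in the paper, contradicting Sun's result that $p\nmid\det[S(p)]$. Likewise in case (ii) your law $\sigma_a(\det[D_p])=(\frac{a}{p})\det[D_p]$ for all $a$ would give $(\det[D_p])^2\in\Q$ rather than $\Q(\sqrt p)$; note also that the paper's Lemma 2.1 concerns the permutation $j^2\mapsto a^2j^2$ (multiplication by the \emph{square} of $a$), not $j^2\mapsto aj^2$.

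The correct version, which is what the paper does, is to restrict to the automorphisms $\sigma_{a^2}$: since $a^2j^2k^2=(aj)^2k^2$ and multiplication by $a^2$ genuinely permutes the quadratic residues, one gets $\sigma_{a^2}(\det[D_p])=\sgn(\tau_a)\det[D_p]=(\frac{a}{p})^{(p+1)/2}\det[D_p]$. The Galois correspondence applied to the subgroup of squares (resp.\ fourth powers) then places $\det[D_p]$ in the quadratic subfield $\Q(i\sqrt p)$ when $p\equiv 3\pmod 4$ (resp.\ the quartic subfield $\Q(\delta_p)$ when $p\equiv 1\pmod 4$), and also gives $(\det[D_p])^2\in\Q(\sqrt p)$ in the latter case. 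Your concluding idea for case (ii) --- writing $\det[D_p]$ as an element of $\Q(\sqrt p)$ times $\delta_p$ because both transform by the same character --- is essentially sound once the transformation law is stated for the right subgroup (the paper instead writes $\det[D_p]=x_p+y_p\delta_p$ with $x_p,y_p\in\Q(\sqrt p)$ and uses $(\det[D_p])^2\in\Q(\sqrt p)$ to kill $x_p$), and the integrality bookkeeping in case (i) is routine as you say. But as written, the central identity on which everything rests is false for half of the Galois group, so the proof has a real gap.
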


\begin{proof}
Let
For each $a\in\Z$ with $\gcd(a,p)=1$, the automorphism $\sigma_a\in\Gal (\Q(\zeta_p)/\Q)$ is defined by sending $\zeta_p$ to $\zeta_p^a$.
Then we have
$$\sigma_{a^2}(\det[D_p])=\det[(\zeta_p^{a^2j^2k^2})]_{0\le j,k\le m}=\(\frac{a}{p}\)^{\frac{p+1}{2}}\det[D_p].$$
The last equation follows from Lemma \ref{Lem. permutation}. Hence by the Galois correspondence we obtain
$$\det[D_p]\in\begin{cases}\Q(i\sqrt{p})&\mbox{if}\ p\equiv 3\pmod4,
\\\Q(\delta_p)&\mbox{if}\ p\equiv 1\pmod4.\end{cases}$$
And when $p\equiv 1\pmod 4$, we also have
$$(\det[D_p])^2\in\Q(\sqrt{p}).$$

When $p\equiv3\pmod4$, clearly $\det[D_p]$ is an algebraic integer. Hence we may write
$$\det[D_p]=u_p+v_pi\sqrt{p}$$ with $u_p,v_p\in\frac12\Z$.

When $p\equiv 1\pmod 4$, we first write
$$\det[D_p]=x_p+y_p\delta_p$$ with $x_p,y_p\in\Q(\sqrt{p})$.
Since
$$(\det[D_p])^2=x_p^2+y_p^2\delta_p^2+2x_py_p\delta_p\in\Q(\sqrt{p}),$$
we have $x_py_p=0$. Noting that $\det[D_p]\not\in\Q(\sqrt{p})$, we therefore obtain $x_p=0$.
Writing $y_p=\alpha_p+\beta_p\sqrt{p}$, we get the desired result.
The proof is now complete.
\end{proof}

$\det[D_p]$ has close relations with $\det[C_p]$. In fact, it is easy to verify the identity
\begin{equation}\label{equation C}
\det[D_p]=(-1)^m\times\prod_{1\le k\le m}(1-\zeta_p^{k^2})\times \det[C_p].
\end{equation}
Now we are in the position to prove our main results.
\medskip

\noindent{\it Proof of Theorem}\ 1.1. Let $p>3$ be a prime with $p\equiv 3\pmod4$. We have (cf. \cite[p.71 Proposition 6.3]{IR})
$$\(\frac{j^2+k^2}{p}\)=\frac{1}{i\sqrt{p}}\(1+2\sum_{1\le t\le m}\zeta_p^{t^2(j^2+k^2)}\)$$
for any $1\le j,k\le m$. Let $\widetilde{D_p}$ be a matrix of the form $(a_{jk})_{0\le j,k\le m}$, where
$$a_{jk}=\begin{cases}1&\mbox{if}\ k=0,
\\2\zeta_p^{j^2k^2}&\mbox{otherwise}\ .\end{cases}$$
Then we have
$$\widetilde{D_p}D_p=i\sqrt{p}\cdot E_p,$$
where $E_p$ is a matrix of the form $(e_{jk})_{0\le j,k\le m}$ with
$$e_{jk}=\begin{cases}-i\sqrt{p}&\mbox{if}\ j=k=0,
\\\(\frac{j^2+k^2}{p}\)&\mbox{otherwise}\ .\end{cases}$$
For each $1\le k\le m$, we have (cf. \cite[p.63]{IR})
$$1+2\sum_{1\le j\le m}\(\frac{j^2+k^2}{p}\)=-1.$$
This shows that
$$\sum_{1\le j\le m}\(\frac{j^2+k^2}{p}\)=-1.$$
Hence for each $1\le k\le m$ the sum of entries in the $k$-th column of $E_p$ is equal to $0$. Thus it is easy to verify the following identity
\begin{equation}\label{equation D}
2^m(\det[D_p])^2=(-p)^{\frac{m+1}{2}}(-i\sqrt{p}+m)\cdot\det[S(p)]
\end{equation}
Using the notations in Lemma \ref{Lem. Galois}, we obtain the following equations
\begin{align}\label{equation E}
2^m(u_p^2-pv_p^2)&=(-p)^{\frac{m+1}{2}}\cdot m\cdot\det[S(p)],\\
2^{m+1}u_pv_p&=-1\cdot(-p)^{\frac{m+1}{2}}\cdot\det[S(p)].
\end{align}
Sun \cite{S19} proved that $p\nmid\det[S(p)]$.
This gives that
\begin{equation}\label{equation F}
\frac{u_p^2-pv_p^2}{u_pv_p}=\frac{u_p}{v_p}-p\frac{v_p}{u_p}=-2m.
\end{equation}
If $\nu_p(u_p)<\nu_p(v_p)$, then by (\ref{equation F}) $u_p/v_p$ is a $p$-adic integer. We get a contradiction.
Suppose now $\nu_p(u_p)\ge\nu_p(v_p)$. This implies that $pv_p/u_p$ is a $p$-adic integer. Hence we have
\begin{equation}\label{equation G}
1+\nu_p(v_p)\ge \nu_p(u_p)\ge \nu_p(v_p).
\end{equation}
As $p\nmid \det[S(p)]$, by (\ref{equation E}) we have $\nu_p(u_p)+\nu_p(v_p)=\frac{m+1}{2}$. Hence
$$\nu_p(u_p)=\begin{cases}\nu_p(v_p)+1=\frac{p+5}{8}&\mbox{if}\ p\equiv 3\pmod8,
\\\nu_p(v_p)=\frac{p+1}{8}&\mbox{if}\ p\equiv 7\pmod8.\end{cases}$$
By (\ref{equation C}) we know that
\begin{align*}
\det[D_p]&=(-1)^m\times\prod_{1\le k\le m}(1-\zeta_p^{k^2})\times \det[C_p]\\
&=-1\cdot(-1)^{\frac{h(-p)+1}{2}}i\sqrt{p}\cdot\det[C_p].
\end{align*}
Hence
$$\det[C_p]=(-1)^{\frac{h(-p)+1}{2}}(-v_p+\frac{u_p}{p}i\sqrt{p}).$$
Let $a_p=-v_p$ and let $b_p=u_p/p$. We get the desired result. This completes our proof.\qed
\medskip

\noindent{\it Proof of Theorem}\ 1.2. Let $p\equiv 1\pmod 4$ be a prime and let $\triangle$ be a quadratic non-residue modulo $p$.
Let $D_p^{\triangle}=(\zeta_p^{\triangle j^2k^2})_{0\le j,k\le m}$ and let $\widetilde{D_p}$ be as in the proof of Theorem 1.1. Then
it is easy to see that
\begin{equation}\label{equation H}
\widetilde{D_p}D_p^{\triangle}=\sqrt{p}\cdot F_p,
\end{equation}
where $F_p=(f_{jk})_{0\le j,k\le m}$ with
$$f_{jk}=\begin{cases}\sqrt{p}&\mbox{if}\ j=k=0,
\\\(\frac{j^2+\triangle k^2}{p}\)&\mbox{otherwise}\ .\end{cases}$$
Noting that
$$\det[F_p]=\sqrt{p}\cdot \det[S(\triangle,p)]+\det[T(\triangle,p)],$$
we therefore obtain that
\begin{equation}\label{equation I}
2^m\cdot \det[D_p]\cdot \sigma_{\triangle}(\det[D_p])=(\sqrt{p})^{m+1}\cdot (\sqrt{p}\cdot \det[S(\triangle,p)]+\det[T(\triangle,p)]),
\end{equation}
where $\sigma_{\triangle}$ is the automorphism in $\Gal(\Q(\zeta_p)/\Q)$ with $\sigma_{\triangle}(\zeta_p)=\zeta_p^{\triangle}$.
Let the notations be as in Lemma \ref{Lem. Galois}. Noting that
$$\sigma_{\triangle}\(\sqrt{\(\frac{2}{p}\)2p+2a\sqrt{p}}\)=\pm \sqrt{\(\frac{2}{p}\)2p-2a\sqrt{p}},$$
we have
\begin{align*}
\pm2^m(\alpha_p^2-p\beta_p^2)\sqrt{4p^2-4ap}&=\pm2^{m+1}b(\alpha_p^2-p\beta_p^2)\sqrt{p}\\
&=p^{\frac{m+2}{2}}\det[S(\triangle,p)]+p^{\frac{m}{2}}\det[T(\triangle,p)]\sqrt{p}.
\end{align*}
This shows that
\begin{align}\label{equation J}
\det[S(\triangle,p)]=&0,\\
\pm2^{m+1}b(\alpha_p^2-p\beta_p^2)=&p^{\frac{m}{2}}\det[T(\triangle,p)].
\end{align}
By (\ref{equation C}) our desired result follows from the equation
\begin{align*}
\det[D_p]=&\prod_{1\le k\le m}(1-\zeta_p^{k^2})\times \det[C_p]\\
=&\ve_p^{-h(p)}\sqrt{p}\times \det[C_p].
\end{align*}
This completes the proof.\qed

\maketitle

\Ack\ This research was supported by the National Natural Science Foundation of China (Grant No. 11571162).


\begin{thebibliography}{99}
\bibitem{BE} B. C. Berndt and R. J. Evans, {\it The determination of Gauss sums}, Bull. Amer. Math. Soc. {\bf5}(1981), 107--129.
\bibitem{BS} Z. I. Borecich and I. R. Shafarevich, Number Theory, Academic Press, 1966.
\bibitem{IR} K. Ireland and M. Rosen, A Classical Introduction to Modern Number Theory (Graduate Texts in Math.; 84),
    2nd ed., Springer, New York, 1990.
\bibitem{S19} Z.-W Sun, {\it On some determinants with Legendre symbol entries}, Finite Fields Appl. {\bf56}(2019), 285--307.
\bibitem{S19B} Z.-W Sun, {\it On some determinants involving the tangent function}, preprint, arXiv: 1901.04837.
\bibitem{Washington} L. Washington, Introduction to Cyclotomic Fields, (Graduate Texts in Math.; 83), 2nd ed., Springer, New York, 1997.
\end{thebibliography}
\end{document}